\theoremstyle{plain}
\newtheorem{theorem}{Theorem}[section]
\newtheorem{lemma}[theorem]{Lemma}
\theoremstyle{definition}
\theoremstyle{remark}
\newcommand{\nri}{n\rightarrow\infty}
\newcommand{\kri}{k\rightarrow\infty}
\newcommand{\tri}{t\rightarrow\infty}
\newcommand{\bbC}{\mathbb{C}}
\newcommand{\bbD}{\mathbb{D}}
\newcommand{\bbN}{\mathbb{N}}
\newcommand{\mca}{\mathcal{A}}
\newcommand{\mcj}{\mathcal{J}}
\newcommand{\mcl}{\mathcal{L}}
\newcommand{\mcm}{\mathcal{M}}
\newcommand{\mcr}{\mathcal{R}}
\newcommand{\eitheta}{e^{i\theta}}
\DeclareMathOperator*{\supp}{supp}
\title[]{Hyponormal Toeplitz Operators on Weighted Bergman Spaces}
\author[]{Brian Simanek}
\date{}
\begin{document}
\maketitle

\begin{abstract}
We consider the Toeplitz operator with symbol ${z^n+C|z|^s}$ acting on certain weighted Bergman spaces and determine for what values of the constant $C$ this operator is hyponormal.  The condition is presented in terms of the norm of an explicit block Jacobi matrix.
\end{abstract}

\vspace{4mm}

\footnotesize\noindent\textbf{Keywords:} Hyponormal operator, Toeplitz Operator, Weighted Bergman Space, Block Jacobi Matrix

\vspace{2mm}

\noindent\textbf{Mathematics Subject Classification:} Primary 47B20; Secondary 47B35

\vspace{2mm}

\normalsize

\section{Introduction}\label{Intro}

\subsection{Weighted Bergman Spaces}\label{wberg}
Let $\mu$ be a probability measure on the interval $[0,1]$ with $1\in\supp(\mu)$ and $\mu(\{1\})=0$.  Using $\mu$, define the measure $\nu$ on the open unit disk $\bbD$ by $d\nu(r\eitheta)=d\mu(r)\times\frac{d\theta}{2\pi}$.  Let $\mca^2_{\nu}(\bbD)$ denote the weighted Bergman space of the unit disk defined by
\[
\mca^2_{\nu}(\bbD)=\left\{f:\int_{\bbD}|f(z)|^2d\nu(z)<\infty,\, f\mathrm{\, is\, analytic\, in\, }\bbD\right\}
\]
We equip $\mca^2_{\nu}(\bbD)$ with the inner product
\[
\left\langle f,g\right\rangle_{\nu}=\int_{\bbD}f(z)\overline{g(z)}\,d\nu(z).
\]
Notice that the rotation invariance of the measure $\nu$ means the monomials $\{z^n\}_{n=0}^{\infty}$ are an orthogonal set in $L^2(\bbD,d\nu)$ and $\mca_{\nu}^2(\bbD)$.  It is a standard fact that $\mca_{\nu}^2(\bbD)$ is a reproducing kernel Hilbert space.  Let us define the set $\{\gamma_t\}_{t\in[0,\infty)}$ by
\[
\gamma_t:=\int_{\bbD}|z|^td\nu(z)=\int_{[0,1]}x^td\mu(x).
\]
Since $1\in\supp(\mu)$, the sequence $\{\gamma_n\}_{n\in\bbN}$ decays subexponentially as $\nri$, meaning for all $t>0$ it holds that $\gamma_{m+t}/\gamma_m\rightarrow1$ as $m\rightarrow\infty$.  Since $\mu(\{1\})=0$, we know $\gamma_t$ approaches $0$ as $\tri$.  With this notation it is true that
\[
\mca^2_{\nu}(\bbD)=\left\{f(z)=\displaystyle\sum_{n=0}^{\infty}a_nz^n:\displaystyle\sum_{n=0}^{\infty}|a_n|^2\gamma_{2n}<\infty\right\}
\]
and the inner product becomes
\[
\left\langle\displaystyle\sum_{n=0}^{\infty}a_nz^n,\displaystyle\sum_{n=0}^{\infty}b_nz^n\right\rangle_{\nu}=\displaystyle\sum_{n=0}^{\infty}a_n\bar{b}_n\gamma_{2n}.
\]
Of particular interest is the case when
\[
d\nu(z)=(\beta+1)(1-|z|^2)^{\beta}dA
\]
for some $\beta\in(-1,\infty)$, where $dA$ is normalized area measure on $\bbD$ (see \cite{BKLSS,HwangLee,HLP,LuLiu,LuShi}).  Notice that when $\beta=0$, the space $\mca_\nu(\bbD)$ is just the usual Bergman space of the unit disk.

A bounded operator $T$ acting on a Hilbert space is said to be \textit{hyponormal} if $[T^*,T]\geq0$, where $T^*$ denotes the adjoint of $T$.  The motivation for studying such operators comes from Putnam's inequality (see \cite[Theorem 1]{Putnam}), which says that hyponormal operators satisfy
\[
\|[T^*,T]\|\leq\frac{|\sigma(T)|_2}{\pi}
\]
where $\sigma(T)$ is the spectrum of $T$ and $|\cdot|_2$ dentoes the two-dimensional area.

If $\varphi\in L^{\infty}(\bbD)$, then we define the operator $T_{\varphi}:\mca^2_{\nu}(\bbD)\rightarrow \mca^2_{\nu}(\bbD)$ with symbol $\varphi$ by
\[
T_{\varphi}(f)=P_{\nu}(\varphi f),
\]
where $P_{\nu}$ denotes the orthogonal projection to $\mca^2_{\nu}(\bbD)$ in $L^2(\bbD,d\nu)$.  There is an extensive literature aimed at characterizing those symbols $\varphi$ for which the corresponding operator $T_{\varphi}$ is hyponormal, much of which focuses on the special case of the classical Bergman space of the unit disk (see \cite{AC,CC,FL,Hwang,Hwang2,HwangLee,HLP,LuLiu,LuShi,Sadraoui,SimHypo}).  The specific symbol we will focus on is $\varphi(z)=z^n+C|z|^s$, where $n\in\bbN$, $C\in\bbC$, and $s\in(0,\infty)$.  The case $n=1$ and $s=2$ in the classical Bergman space was considered in \cite{FL} while a broader range of $n$ and $s$ was previously considered in \cite{SimHypo}, where it was shown that hyponormality of $T_{\varphi}$ acting on the classical Bergman space implies $|C|\leq\frac{n}{s}$ and the converse holds if $s\geq2n$.  Theorem \ref{better1} below will complete that result by providing necessary and sufficient conditions on the constant $C$ for $T_{\varphi}$ acting on any $\mca_{\nu}^2(\bbD)$ to be hyponormal.  As a result, we will recover the aforementioned result from \cite{SimHypo}.  Our condition is stated in terms of the norm of a certain self-adjoint operator that happens to be a block Jacobi matrix.



\subsection{Block Jacobi Matrices}\label{block}

Block Jacobi matrices are matrices of the form
\[
\mcm=\begin{pmatrix}
B_1 & A_1 & 0 & \cdots & \cdots\\
A_1^* & B_2 & A_2 & 0 & \cdots\\
0 & A_2^* & B_3 & A_3 & \ddots\\
\vdots & \vdots & \ddots & \ddots & \ddots
\end{pmatrix}
\]
where each $A_m$ and $B_m$ is a $k\times k$ matrix for some fixed $k\in\bbN$ with $B_m=B_m^*$ and $\det(A_m)\neq0$ for all $m\in\bbN$.  An extensive introduction to the theory and applications of these operators can be found in \cite{DPS}, so we will only mention the facts that are directly relevant to our investigation.

In the context of our problem, a block Jacobi matrix is a bounded self-adjoint operator from $\ell^2(\bbN_0)$ to $\ell^2(\bbN_0)$ (where $\bbN_0=\bbN\cup\{0\}$) so its spectrum is a compact subset of the real line.  While the spectrum of such an operator is in general difficult to compute, one can easily verify that if $B_m\equiv0$ and $A_m\equiv I_{k\times k}$ for all $m\in\bbN$, then the spectrum of the corresponding block Jacobi matrix is $[-2,2]$.  In particular, the norm of this operator is $2$ in this case.

\section{Main Result}\label{Main}

With this basic knowledge of block Jacobi matrices in hand, we can now state our main result.
Suppose $\nu$ is as in Section \ref{wberg}, $s\in(0,\infty)$, and $n\in\bbN$ have been fixed.  Define the block Jacobi matrix $\mcj(\nu)$ by
\begin{align*}
&\mcj(\nu)_{n+k,k}=\mcj(\nu)_{k,n+k}=\begin{cases}
\frac{\gamma_{2k+2n+s}-\frac{\gamma_{2k+2n}\gamma_{2k+s}}{\gamma_{2k}}}{\sqrt{\gamma_{2k+2n}}\sqrt{\gamma_{2k+4n}-\frac{\gamma_{2k+2n}^2}{\gamma_{2k}}}}\qquad\qquad\qquad&  k=0,\ldots,n-1\\
\,\\
\frac{\left(\gamma_{2k+2n+s}-\frac{\gamma_{2k+2n}\gamma_{2k+s}}{\gamma_{2k}}\right)}{\sqrt{\gamma_{2k+2n}-\frac{\gamma_{2k}^2}{\gamma_{2k-2n}}}\sqrt{\gamma_{2k+4n}-\frac{\gamma_{2k+2n}^2}{\gamma_{2k}}}}& k\geq n
\end{cases}
\end{align*}
and all other entries of $\mcj(\nu)$ are equal to $0$.  
Note that the log-convexity of $\{\gamma_t\}_{t>0}$ (see \cite[Theorem 1.3.4]{NPBook}) implies that the quantities under the square roots in the entries of $\mcj(\nu)$ are all non-negative.  However, if we write
\begin{align*}
\gamma_{2k+4n}\gamma_{2k}-\gamma_{2k+2n}^2&=\int_{[0,1]^2}(x^{2k+4n}y^{2k}-x^{2k+2n}y^{2k+2n})d\mu(x)d\mu(y)\\
&=\int_{[0,1]^2}(xy)^{2k}x^{2n}(x^{2n}-y^{2n})d\mu(x)d\mu(y)
\end{align*}
and then symmetrize to obtain
\begin{align*}
\gamma_{2k+4n}\gamma_{2k}-\gamma_{2k+2n}^2&=\frac{1}{2}\int_{[0,1]^2}(xy)^{2k}(x^{2n}-y^{2n})^2d\mu(x)d\mu(y)>0,
\end{align*}
we see that the quantities under the square roots in the entries of $\mcj(\nu)$ are all strictly positive.  Similar reasoning shows the numerators of those expressions are also strictly positive.

Our main result can be stated as follows.

\begin{theorem}\label{better1}
Suppose $C\in\bbC$, $s\in(0,\infty)$, and $n\in\bbN$.  The operator $T_{z^n+C|z|^{s}}$ acting on $\mca^2_{\nu}(\bbD)$ is hyponormal if and only if $|C|\leq\|\mcj(\nu)\|^{-1}$.
\end{theorem}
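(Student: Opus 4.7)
The plan is to write $\varphi(z) = z^n + C|z|^s$ and decompose $[T_\varphi^*, T_\varphi]$ in the orthonormal basis $\{e_k := z^k/\sqrt{\gamma_{2k}}\}_{k \geq 0}$ of $\mca_\nu^2(\bbD)$. Since $|z|^s$ is radial, $B := T_{|z|^s}$ is diagonal with $Be_k = (\gamma_{2k+s}/\gamma_{2k})e_k$; since $z^n \cdot z^k = z^{n+k}$ is already analytic, $A := T_{z^n}$ is the weighted shift $Ae_k = \sqrt{\gamma_{2k+2n}/\gamma_{2k}}\,e_{n+k}$. Because $B = B^*$, expanding $T_\varphi = A + CB$ yields
\[
[T_\varphi^*, T_\varphi] \;=\; D + C K^* + \bar C K, \qquad D := [A^*, A], \quad K := [B, A],
\]
with $D$ diagonal and $Ke_k = \kappa_k e_{n+k}$ a scalar multiple shift by $n$. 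The symmetrization argument carried out in the excerpt shows every $D_{kk} > 0$; an analogous symmetrization, $\gamma_{2k+2n+s}\gamma_{2k} - \gamma_{2k+s}\gamma_{2k+2n} = \tfrac{1}{2}\iint (xy)^{2k}(x^s - y^s)(x^{2n} - y^{2n})\,d\mu(x)\,d\mu(y) > 0$, shows each $\kappa_k > 0$.

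The next step is to normalize and absorb the phase of $C$. Because $D > 0$, the condition $D + CK^* + \bar C K \geq 0$ is equivalent (as a quadratic-form inequality on the dense subspace $\mathrm{Range}(D^{1/2})$) to $I + \bar C \tilde K + C \tilde K^* \geq 0$, where $\tilde K := D^{-1/2} K D^{-1/2}$. Writing $C = |C|e^{i\theta}$ and conjugating by the diagonal unitary $U e_k := e^{ik\theta/n}\,e_k$ gives $U \tilde K U^* = e^{i\theta}\tilde K$, so the inequality reduces to $I + |C|(\tilde K + \tilde K^*) \geq 0$. Bookkeeping $\tilde K_{n+k,k} = \kappa_k/(d_k d_{n+k})$ with $d_k := \sqrt{D_{kk}}$, and splitting into the cases $k < n$ and $k \geq n$ that match the two formulas for $D_{kk}$, one verifies entrywise that $\tilde K + \tilde K^* = \mcj(\nu)$. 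Hyponormality of $T_\varphi$ is therefore equivalent to $I + |C|\mcj(\nu) \geq 0$.

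To finish, observe that $\mcj(\nu)$ has zero diagonal and nonzero entries only between indices differing by $n$, so its nonzero-support graph is a disjoint union of $n$ infinite paths (one per residue class $r \bmod n$). On each path, the sign flip $e_{jn+r} \mapsto (-1)^j e_{jn+r}$ is a unitary that conjugates $\mcj(\nu)$ to $-\mcj(\nu)$; thus $\sigma(\mcj(\nu))$ is symmetric about $0$, and $\|\mcj(\nu)\| = -\inf \sigma(\mcj(\nu))$. Consequently $I + |C|\mcj(\nu) \geq 0$ iff $|C| \leq \|\mcj(\nu)\|^{-1}$, proving the equivalence. The main obstacle is the entrywise identification $\tilde K + \tilde K^* = \mcj(\nu)$: the two cases in the definition of $\mcj(\nu)$ must emerge precisely from the two formulas for $D_{kk}$ after careful cancellation of $\sqrt{\gamma_{2k}}$ factors. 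The remaining ingredients — the phase absorption via $U$ and the bipartite sign-flip — are clean structural consequences of the shift-by-$n$ structure of $K$.
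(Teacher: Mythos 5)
Your proposal is correct, and its skeleton is the same as the paper's: you diagonalize the radial part via Lemma \ref{bproj}, use the weighted-shift structure of $T_{z^n}$, and rescale so that the commutator's quadratic form becomes the quadratic form of $\mcj(\nu)$ --- your congruence by $D^{-1/2}$ is exactly the paper's substitution $v_j=u_j\sqrt{\cdots}$ written in operator form, and the entrywise identification $\tilde K+\tilde K^*=\mcj(\nu)$ does check out in both cases $k<n$ and $k\ge n$. Where you genuinely diverge is in the two finishing steps. First, the paper reduces the hyponormality condition to the variational inequality \eqref{cinf} over nonnegative sequences, deferring the replacement of $C$ by $|C|$ to the argument of \cite{SimHypo}; you instead absorb the phase by conjugating with the diagonal unitary $Ue_k=e^{ik\theta/n}e_k$, which is cleaner and self-contained. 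Second, the paper identifies the resulting supremum with $\|\mcj(\nu)\|$ via the quadratic-form characterization of the norm (legitimate there because all entries of $\mcj(\nu)$ are positive) together with a density argument for the weighted space $\ell^2_{\gamma}(\bbN_0)$; you instead note the bipartite (shift-by-$n$, zero-diagonal) structure, conjugate by the sign-flip unitary to get symmetry of $\sigma(\mcj(\nu))$ about $0$, and conclude $I+|C|\mcj(\nu)\ge0$ iff $|C|\,\|\mcj(\nu)\|\le1$ without using positivity of the entries. Both routes rely on Theorem \ref{bounded} for boundedness (you need it already to make sense of $\tilde K$ as a bounded operator, or at least to note that a weighted shift with bounded weights is bounded). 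One small point of care: your ``quadratic-form inequality on $\mathrm{Range}(D^{1/2})$'' step is best phrased on finitely supported vectors, where $D^{1/2}$ is a bijection and both forms are those of bounded operators; this is precisely the density issue the paper handles with $\ell^2_{\gamma}(\bbN_0)$. With that phrasing your argument is complete, and arguably more structural, while the paper's version has the advantage of displaying the explicit variational quantity \eqref{in2}.
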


The above discussion verifies that $\mcj(\nu)$ is a block Jacobi matrix.  Our next task is to prove the following result, which has clear implications for the application of Theorem \ref{better1}.

\begin{theorem}\label{bounded}
The spectrum of the operator $\mcj(\nu)$ consists of the interval $[-s/n,s/n]$ and an at most countable set of isolated points whose only accumulation points are among $\{\pm s/n\}$.  In particular, the operator $\mcj(\nu)$ is bounded.
\end{theorem}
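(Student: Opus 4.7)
Since the explicit formula for $\mcj(\nu)$ shows its only non-zero entries sit at positions $(i,j)$ with $|i-j|=n$, I would first observe that for each residue class $r \in \{0, 1, \ldots, n-1\}$ the closed span in $\ell^2(\bbN_0)$ of the basis vectors $\{e_{r+mn}\}_{m \geq 0}$ is invariant under $\mcj(\nu)$. The restriction $J_r$ is an ordinary scalar tridiagonal Jacobi matrix with zero diagonal and strictly positive off-diagonal entries $a_m^{(r)} := \mcj(\nu)_{r+mn,\,r+(m-1)n}$, so $\mcj(\nu) = J_0 \oplus J_1 \oplus \cdots \oplus J_{n-1}$.

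A standard consequence of Weyl's theorem for scalar Jacobi matrices (see, e.g., \cite{DPS}) says that if such a matrix has zero diagonal and off-diagonal entries $a_m \to a_\infty > 0$, then it is bounded, its essential spectrum is the interval $[-2a_\infty, 2a_\infty]$, and the remainder of its spectrum is an at-most-countable set of isolated eigenvalues whose only possible accumulation points are $\pm 2a_\infty$. Because a finite direct sum of bounded operators is bounded and the essential spectrum of a finite direct sum is the union of the essential spectra of the summands, Theorem \ref{bounded} reduces to showing $a_m^{(r)} \to s/(2n)$ as $m \to \infty$ for every $r$.

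To establish this asymptotic I would reinterpret the ratios defining $\mcj(\nu)_{n+k,k}$ probabilistically. Let $\mu_k$ denote the probability measure $d\mu_k(x) = x^{2k} d\mu(x)/\gamma_{2k}$ on $[0,1]$. A direct algebraic manipulation of the formula in the statement gives, for $k \geq n$,
\[
\mcj(\nu)_{n+k,k} = \sqrt{\gamma_{2k}/\gamma_{2k-2n}} \cdot \frac{\mathrm{Cov}_{\mu_k}(x^{2n}, x^s)}{\sqrt{\mathrm{Var}_{\mu_{k-n}}(x^{2n})\,\mathrm{Var}_{\mu_k}(x^{2n})}}.
\]
Since $1 \in \supp(\mu)$ and $\mu(\{1\}) = 0$, the measures $\mu_k$ converge weakly to $\delta_1$. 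The Taylor expansion $x^a = 1 - a(1-x) + O((1-x)^2)$ at $x=1$ then suggests $\mathrm{Cov}_{\mu_k}(x^{2n}, x^s) \sim 2ns\,\mathrm{Var}_{\mu_k}(x)$ and $\mathrm{Var}_{\mu_k}(x^{2n}) \sim 4n^2\,\mathrm{Var}_{\mu_k}(x)$, from which
\[
\mcj(\nu)_{n+k,k} \sim \frac{s}{2n} \cdot \sqrt{\frac{\gamma_{2k}}{\gamma_{2k-2n}}} \cdot \sqrt{\frac{\mathrm{Var}_{\mu_k}(x)}{\mathrm{Var}_{\mu_{k-n}}(x)}}.
\]
Combined with $\gamma_{2k}/\gamma_{2k-2n} \to 1$ (subexponential decay) and $\mathrm{Var}_{\mu_k}(x)/\mathrm{Var}_{\mu_{k-n}}(x) \to 1$, this produces the desired limit $s/(2n)$.

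The principal obstacle is to justify these two asymptotic statements rigorously. To control the Taylor remainders one partitions the integration against $\mu_k \otimes \mu_k$ into a small neighborhood of $(1,1)$, where the local expansion is valid, and its complement, on which $(xy)^{2k}$ is exponentially small and hence negligible against the merely subexponential normalizing factor $\gamma_{2k}^2$. For the variance-ratio, applying the symmetrization identity $2(\gamma_{2j+2}\gamma_{2j} - \gamma_{2j+1}^2) = \int_{[0,1]^2}(xy)^{2j}(x-y)^2\,d\mu(x)d\mu(y)$ reduces the claim to subexponential decay for the moment sequence of the measure $\tau$ on $[0,1]$ obtained by pushing $(x-y)^2 d\mu(x) d\mu(y)$ forward under $(x,y) \mapsto xy$. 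The hypotheses on $\mu$ force infinitely many support points of $\mu$ to accumulate at $1$, giving distinct support points of $\mu \otimes \mu$ arbitrarily close to but off the diagonal of $(1,1)$, so $1 \in \supp(\tau)$ and $\tau(\{1\}) = 0$, which supplies exactly the subexponential property needed.
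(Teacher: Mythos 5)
Your proposal is correct in substance and, at its core, runs along the same lines as the paper's argument: everything reduces to showing $\mcj(\nu)_{n+k,k}\to s/(2n)$ as $\kri$, after which a Weyl-type essential-spectrum argument finishes the proof. The packaging differs in two ways. First, you split $\mcj(\nu)$ into $n$ scalar Jacobi matrices along residue classes modulo $n$; the paper instead views $\mcj(\nu)$ directly as a compact perturbation of $\frac{s}{2n}(\mcl^n+\mcr^n)$ --- the same observation, since that comparison operator is exactly the direct sum of $n$ copies of the free Jacobi matrix scaled by $s/(2n)$. Second, for the entry asymptotics you recast the entries as normalized covariances and variances of $x^{2n}$ and $x^{s}$ under $d\mu_k=x^{2k}d\mu/\gamma_{2k}$ (your algebraic identity for $\mcj(\nu)_{n+k,k}$ is correct) and use localization near $(1,1)$ plus a Taylor/difference-quotient argument; the paper symmetrizes the same double integrals and traps the ratio between \eqref{lowbound} and \eqref{upbound} using monotonicity of $Zu^{2n}-u^{s}$ near $u=1$, with Lemma \ref{sube} supplying the subexponential decay that legitimizes the localization. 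Your pushforward measure $\tau$ of $(x-y)^2\,d\mu\,d\mu$ under $(x,y)\mapsto xy$, with $1\in\supp(\tau)$ and $\tau(\{1\})=0$, is precisely the same mechanism as Lemma \ref{sube}, phrased for the weight $(x-y)^2$ instead of $(x^{2n}-y^{2n})^2$, and it also delivers the ratio $\mathrm{Var}_{\mu_k}(x)/\mathrm{Var}_{\mu_{k-n}}(x)\to1$ that you need.

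One step must be re-aimed, though the ingredient to fix it is already in your write-up: when you discard the complement of a neighborhood of $(1,1)$, you say its contribution is negligible against the subexponential normalizer $\gamma_{2k}^2$. That is not the relevant comparison, because after dividing by $\gamma_{2k}^2$ the main terms $\mathrm{Cov}_{\mu_k}(x^{2n},x^{s})$ and $\mathrm{Var}_{\mu_k}(x)$ themselves tend to $0$; what is needed is that the off-corner contribution, of size $O((1-\delta)^{2k})$, be negligible against the corner contribution, i.e.\ against $\int_{[0,1]}t^{2k}\,d\tau(t)$, which decays subexponentially but does decay. A crude corner lower bound such as $(1-\delta)^{4k}\int\int_{[1-\delta,1]^2}(x-y)^2d\mu\,d\mu$ is beaten by $(1-\delta)^{2k}$, so the subexponential decay of the $\tau$-moments is genuinely required at this point --- exactly the role Lemma \ref{sube} plays in the paper. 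Since you prove that decay in your final paragraph, the repair is immediate: compare the exponentially small error to the $\tau$-moments rather than to $\gamma_{2k}^2$. With that redirection, your argument goes through.
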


The proof will require the following elementary lemma.

\begin{lemma}\label{sube}
It holds that
\[
\lim_{\kri}\left(\int_{[0,1]^2}(xy)^{k}(x^{2n}-y^{2n})^2d\mu(x)d\mu(y)\right)^{1/k}=1
\]
\end{lemma}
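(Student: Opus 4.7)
The plan is to compute the integral explicitly in terms of the moments $\gamma_t$, then bound it above and below by quantities whose $k$-th roots tend to $1$.

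First I would expand $(x^{2n}-y^{2n})^2=x^{4n}-2x^{2n}y^{2n}+y^{4n}$, multiply by $(xy)^k$, and integrate term by term. The product measure structure immediately gives
\[
\int_{[0,1]^2}(xy)^k(x^{2n}-y^{2n})^2\,d\mu(x)\,d\mu(y)=2\bigl(\gamma_k\gamma_{k+4n}-\gamma_{k+2n}^2\bigr),
\]
so the problem reduces to showing this scalar tends to $1$ after taking a $k$-th root. For the upper bound, since $x\in[0,1]$ on $\supp(\mu)$ we have $0\leq\gamma_t\leq 1$ and $\gamma_t$ is non-increasing in $t$, so $\gamma_k\gamma_{k+4n}-\gamma_{k+2n}^2\leq\gamma_k\gamma_{k+4n}\leq\gamma_k$. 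The hypothesis $1\in\supp(\mu)$ gives $\gamma_k\geq(1-\delta)^k\mu((1-\delta,1])$ for every $\delta>0$, hence $\liminf_k\gamma_k^{1/k}\geq 1-\delta$; combined with $\gamma_k\leq 1$ this shows $\gamma_k^{1/k}\to 1$, so $\limsup_k(\text{integral})^{1/k}\leq 1$.

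For the matching lower bound the key observation is that the two hypotheses $1\in\supp(\mu)$ and $\mu(\{1\})=0$ together force $\supp(\mu)\cap(1-\varepsilon,1)$ to be nonempty and in fact infinite for every $\varepsilon>0$: since $1\in\supp(\mu)$ we have $\mu((1-\varepsilon,1])>0$, and subtracting $\mu(\{1\})=0$ yields $\mu((1-\varepsilon,1))>0$, producing some $p_1\in\supp(\mu)\cap(1-\varepsilon,1)$; repeating the argument with $\varepsilon$ replaced by $1-p_1$ produces $p_2\in\supp(\mu)\cap(p_1,1)$. Choose disjoint open intervals $I_1\ni p_1$ and $I_2\ni p_2$ inside $(1-\varepsilon,1)$, each of positive $\mu$-measure. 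On $I_1\times I_2$ the integrand satisfies $xy\geq(1-\varepsilon)^2$ and $(x^{2n}-y^{2n})^2\geq c$ for some constant $c>0$ depending on $\varepsilon,n,p_1,p_2,I_1,I_2$ but not on $k$. Therefore the integral is at least $c(1-\varepsilon)^{2k}\mu(I_1)\mu(I_2)$, and taking $k$-th roots gives $\liminf_k(\text{integral})^{1/k}\geq(1-\varepsilon)^2$. Since $\varepsilon>0$ is arbitrary, the liminf is at least $1$, completing the proof.

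The only real obstacle is the lower bound: one is tempted to extract it directly from the subexponential decay $\gamma_{k+2n}/\gamma_k\to 1$, but quantifying how small $\gamma_k\gamma_{k+4n}-\gamma_{k+2n}^2$ can be would require more regularity of $\mu$ than the hypotheses supply. The off-diagonal approach above sidesteps this by exploiting only the topological fact that $1$ is a non-isolated point of $\supp(\mu)$, which is precisely what the standing hypotheses guarantee.
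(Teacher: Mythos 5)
Your proof is correct and follows essentially the same route as the paper: the upper bound is immediate, and the lower bound comes from localizing near $1$ where $(xy)^k\geq(1-\varepsilon)^{2k}$ and bounding what remains by a $k$-independent positive constant (the paper uses $\int_{[1-\delta,1]^2}(x^{2n}-y^{2n})^2\,d\mu(x)\,d\mu(y)>0$, leaving its positivity implicit, while you make it explicit via two support points). One small fix: choose $I_1$ and $I_2$ with disjoint closures, i.e.\ a positive gap between them, rather than merely disjoint, since otherwise $\inf_{I_1\times I_2}(x^{2n}-y^{2n})^2$ could be $0$ and your constant $c$ would not exist.
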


\begin{proof}
It is clear that
\[
\limsup_{\kri}\left(\int_{[0,1]^2}(xy)^{k}(x^{2n}-y^{2n})^2d\mu(x)d\mu(y)\right)^{1/k}\leq1
\]
Also notice that if $\delta\in(0,1)$ is fixed, then
\begin{align*}
&\liminf_{\kri}\left(\int_{[0,1]^2}(xy)^{k}(x^{2n}-y^{2n})^2d\mu(x)d\mu(y)\right)^{1/k}\\
&\qquad\qquad\qquad\geq\liminf_{\kri}\left(\int_{[1-\delta,1]^2}(xy)^{k}(x^{2n}-y^{2n})^2d\mu(x)d\mu(y)\right)^{1/k}\\
&\qquad\qquad\qquad\geq(1-\delta)^2\liminf_{\kri}\left(\int_{[1-\delta,1]^2}(x^{2n}-y^{2n})^2d\mu(x)d\mu(y)\right)^{1/k}\\
&\qquad\qquad\qquad=(1-\delta)^2
\end{align*}
Sending $\delta\rightarrow0$ proves the lemma.
\end{proof}

\begin{proof}[Proof of Theorem \ref{bounded}]
We will show that
\[
\lim_{\kri}\mcj(\nu)_{n+k,k}=\frac{s}{2n}
\]
This will show that $\mcj(\nu)$ is a compact perturbation of the block Jacobi matrix $\frac{s}{2n}(\mcl^n+\mcr^n)$, where $\mcl$ is the left shift and $\mcr$ is the right shift on $\ell^2(\bbN_0)$.  The operator $\frac{s}{2n}(\mcl^n+\mcr^n)$ has spectrum equal to $[-s/n,s/n]$, from which the desired conclusion follows.

Recall that for any $\eta>0$ it holds that
\[
\lim_{t\rightarrow\infty}\frac{\gamma_{t+\eta}}{\gamma_t}=1.
\]
This implies
\begin{equation}\label{jmodk}
\mcj(\nu)_{n+k,k}=(1+o(1))\frac{\gamma_{2k+2n+s}\gamma_{2k}-\gamma_{2k+2n}\gamma_{2k+s}}{\sqrt{\gamma_{2k+2n}\gamma_{2k-2n}-\gamma_{2k}^2}\sqrt{\gamma_{2k+4n}\gamma_{2k}-\gamma_{2k+2n}^2}}
\end{equation}
as $\kri$.  Now we write
\begin{align*}
\gamma_{2k+2n+s}\gamma_{2k}-\gamma_{2k+2n}\gamma_{2k+s}&=\int_{[0,1]^2}(x^{2k+2n+s}y^{2k}-x^{2k+2n}y^{2k+s})d\mu(x)d\mu(y)\\
&=\int_{[0,1]^2}(xy)^{2k}x^{2n}(x^s-y^s)d\mu(x)d\mu(y)
\end{align*}
Interchanging the roles of $x$ and $y$ and adding these expressions, we find
\begin{equation*}
\gamma_{2k+2n+s}\gamma_{2k}-\gamma_{2k+2n}\gamma_{2k+s}=\frac{1}{2}\int_{[0,1]^2}(xy)^{2k}(x^{2n}-y^{2n})(x^s-y^s)d\mu(x)d\mu(y)
\end{equation*}
Using similar reasoning on the expressions in the denominator of \eqref{jmodk}, we can rewrite the leading term of \eqref{jmodk} as
\begin{equation}\label{gform}
\frac{\int_{[0,1]^2}(xy)^{2k}(x^{2n}-y^{2n})(x^s-y^s)d\mu(x)d\mu(y)}{\sqrt{\left(\int_{[0,1]^2}(xy)^{2k-2n}(x^{2n}-y^{2n})^2d\mu(x)d\mu(y)\right)\left(\int_{[0,1]^2}(xy)^{2k}(x^{2n}-y^{2n})^2d\mu(x)d\mu(y)\right)}}
\end{equation}
We can bound \eqref{gform} from above by
\begin{equation}\label{upbound}
\frac{\int_{[0,1]^2}(xy)^{2k}(x^{2n}-y^{2n})(x^s-y^s)d\mu(x)d\mu(y)}{\int_{[0,1]^2}(xy)^{2k}(x^{2n}-y^{2n})^2d\mu(x)d\mu(y)}
\end{equation}
and from below by
\begin{equation}\label{lowbound}
\frac{\int_{[0,1]^2}(xy)^{2k}(x^{2n}-y^{2n})(x^s-y^s)d\mu(x)d\mu(y)}{\int_{[0,1]^2}(xy)^{2k-2n}(x^{2n}-y^{2n})^2d\mu(x)d\mu(y)}
\end{equation}
Lemma \ref{sube} implies that the denominators in \eqref{upbound} and \eqref{lowbound} decay subexponentially as $\kri$, and hence we obtain the same asymptotic behavior as $\kri$ if we replace each integral in the numerators of \eqref{upbound} and \eqref{lowbound} by the integral over $[1-\epsilon,1]^2$ for some $\epsilon>0$.

Now, fix some $\epsilon\in(0,1)$ and define
\[
g(\epsilon):=\max_{1-\epsilon\leq t\leq1}t^{s-2n}=\begin{cases}
(1-\epsilon)^{s-2n}\quad&\mbox{if}\quad s<2n\\
1 &\mbox{if}\quad s\geq2n.
\end{cases}
\]
If $Z>\frac{sg(\epsilon)}{2n}$, then $Zu^{2n}-u^s$ is an increasing function of $u$ on $[1-\epsilon,1]$.  Thus, when $Z>\frac{sg(\epsilon)}{2n}$ and $1\geq x\geq y\geq1-\epsilon$ it holds that $(x^s-y^s)<Z(x^{2n}-y^{2n})$.  It follows from \eqref{upbound} that for such a $Z$ we have
\begin{align*}
\limsup_{\kri}\mcj(\nu)_{n+k,k}\leq\limsup_{\kri}\frac{Z\int_{[1-\epsilon,1]^2}(xy)^{2k}(x^{2n}-y^{2n})^2d\mu(x)d\mu(y)}{\int_{[0,1]^2}(xy)^{2k}(x^{2n}-y^{2n})^2d\mu(x)d\mu(y)}=Z,
\end{align*}
where we used the fact that the denominator of the expression in \eqref{upbound} decays subexponentially as $\kri$.  Since $Z>\frac{sg(\epsilon)}{2n}$ was arbitrary, we conclude that
\[
\limsup_{\kri}\mcj(\nu)_{n+k,k}\leq\frac{sg(\epsilon)}{2n}.
\]
Taking $\epsilon\rightarrow0$, we obtain the desired upper bound.

By applying similar reasoning, we see that if $h(\epsilon):=\min_{1-\epsilon\leq t\leq1}t^{s-2n}$ and $Z'<\frac{sh(\epsilon)}{2n}$, then
\begin{align*}
\liminf_{\kri}\mcj(\nu)_{n+k,k}&\geq\liminf_{\kri}\frac{Z'\int_{[1-\epsilon,1]^2}(xy)^{2k}(x^{2n}-y^{2n})^2d\mu(x)d\mu(y)}{\int_{[0,1]^2}(xy)^{2k-2n}(x^{2n}-y^{2n})^2d\mu(x)d\mu(y)}\\
&\geq\liminf_{\kri}\frac{Z'(1-\epsilon)^{4n}\int_{[1-\epsilon,1]^2}(xy)^{2k-2n}(x^{2n}-y^{2n})^2d\mu(x)d\mu(y)}{\int_{[0,1]^2}(xy)^{2k-2n}(x^{2n}-y^{2n})^2d\mu(x)d\mu(y)}\\
&=Z'(1-\epsilon)^{4n}.
\end{align*}
Since $Z'<\frac{sh(\epsilon)}{2n}$ was arbitrary, we conclude that
\[
\liminf_{\kri}\mcj(\nu)_{n+k,k}\geq\frac{sh(\epsilon)(1-\epsilon)^{4n}}{2n}
\]
Taking $\epsilon\rightarrow0$, we obtain the desired lower bound.
\end{proof}

As an example of Proposition \ref{bounded}, consider the case when $\mu=2rdr$.  In this case the measure $\nu$ is normalized area measure on $\bbD$ and $\gamma_{t}=2(t+2)^{-1}$ so $\mcj(dA)$ is
\begin{align*}
&\mcj(dA)_{n+k,k}=\mcj(dA)_{k,n+k}=\begin{cases}
\frac{s\sqrt{(k+n+1)(k+2n+1)}}{2(k+1+s/2)(k+n+1+s/2)}\qquad\qquad\qquad&  k=0,\ldots,n-1\\
\,\\
\frac{s(k+1)\sqrt{(k+n+1)(k+2n+1)}}{2n(k+1+s/2)(k+n+1+s/2)}& k\geq n.
\end{cases}
\end{align*}
We see that the conclusion of Theorem \ref{bounded} holds true for this matrix.  Furthermore, one can quickly verify by hand that if $s\geq2n$, then each non-zero entry of $\mcj(dA)$ is less than $\frac{s}{2n}$.  This implies that when $s\geq2n$ the spectrum of $\mcj(dA)$ is precisely $[-s/n,s/n]$ and so Theorem \ref{better1} implies \cite[Theorem 2]{SimHypo}.  



\section{Proof of Theorem \ref{better1}}

Throughout this section, let us suppose that $n\in\bbN$, $s\in(0,\infty)$, and $\nu$ as in Section \ref{wberg} are fixed.  As in \cite{FL,SimHypo}, we will use the formula
\begin{align}\label{expand}
 &\left\langle \left[(T+S)^{*},T+S\right]u,u\right\rangle \nonumber \\
 &\quad =\left\langle Tu,Tu\right\rangle -\left\langle T^{*}u,T^{*}u\right\rangle +2\mathrm{Re}\left[\left\langle Tu,Su\right\rangle -\left\langle T^{*}u,S^{*}u\right\rangle \right]+\left\langle Su,Su\right\rangle -\left\langle S^{*}u,S^{*}u\right\rangle
\end{align}
with $T=T_{z^n}$ and $S=T_{C|z|^s}$.
The first step in our proof will be the following adaptation of \cite[Lemma 1]{SimHypo} to weighted Bergman spaces.

\begin{lemma}\label{bproj}
If $k\in\bbN_0:=\bbN\cup\{0\}$ and $t\in(0,\infty)$, then
\[
P_{\nu}(z^k|z|^t)=\frac{\gamma_{2k+t}}{\gamma_{2k}}z^k,\qquad\quad\mbox{and}\qquad\quad P_{\nu}(\bar{z}^{k+1}|z|^t)=0.
\]
\end{lemma}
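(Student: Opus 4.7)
The plan is to exploit the rotational invariance of $\nu$ together with the fact that $\{z^m\}_{m=0}^{\infty}$ is an orthogonal basis of $\mathcal{A}^2_{\nu}(\mathbb{D})$, with $\langle z^m, z^m\rangle_{\nu} = \gamma_{2m}$. Given any $f \in L^2(\mathbb{D}, d\nu)$, the projection can be expanded as
\[
P_{\nu}(f) = \sum_{m=0}^{\infty} \frac{\langle f, z^m\rangle_{\nu}}{\gamma_{2m}}\, z^m,
\]
so both identities reduce to computing inner products of the form $\langle z^k|z|^t, z^m\rangle_{\nu}$ and $\langle \bar z^{k+1}|z|^t, z^m\rangle_{\nu}$.

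For the first identity, I would pass to polar coordinates $z = re^{i\theta}$ and use the product structure $d\nu(re^{i\theta}) = d\mu(r) \times \frac{d\theta}{2\pi}$. The integrand $z^k|z|^t\bar z^m$ becomes $r^{k+t+m} e^{i(k-m)\theta}$, so the angular integral produces $\delta_{k,m}$ and the radial integral then yields $\int_0^1 r^{2k+t}\,d\mu(r) = \gamma_{2k+t}$. Only the $m=k$ term survives in the expansion of $P_{\nu}(z^k|z|^t)$, and dividing by $\gamma_{2k}$ gives exactly $\frac{\gamma_{2k+t}}{\gamma_{2k}} z^k$.

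For the second identity, the same polar decomposition reduces $\langle \bar z^{k+1}|z|^t, z^m\rangle_{\nu}$ to the angular integral $\int_0^{2\pi} e^{-i(k+1+m)\theta}\frac{d\theta}{2\pi}$ times a nonnegative radial factor. Since $k \geq 0$ and $m \geq 0$, the exponent $k+1+m$ is a strictly positive integer, so this angular integral vanishes identically. Hence every Fourier coefficient of $\bar z^{k+1}|z|^t$ against the basis $\{z^m\}$ is zero, and $P_{\nu}(\bar z^{k+1}|z|^t)=0$.

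There is no real obstacle in this argument; the computation is entirely mechanical once one notices that $|z|^t$ is rotation-invariant and hence does not disturb the orthogonality relations between $z^k$ (or $\bar z^{k+1}$) and the monomials $z^m$. The only step requiring any care is keeping track of the normalization $\gamma_{2m} = \langle z^m, z^m\rangle_{\nu}$ when writing out the expansion of the projection, which is immediate from the definition of $\gamma_t$.
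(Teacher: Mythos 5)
Your proof is correct and follows essentially the same route as the paper: both arguments reduce to checking inner products of $z^k|z|^t$ and $\bar z^{k+1}|z|^t$ against the monomials $z^m$, using the rotation invariance of $\nu$ and the fact that the monomials span a dense set in $\mca_\nu^2(\bbD)$. Your version merely makes the paper's ``a calculation shows'' explicit via the orthogonal-basis expansion of $P_\nu$ in polar coordinates.
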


\begin{proof}
A calculation shows that
\[
\left\langle z^q, \frac{\gamma_{2k+t}}{\gamma_{2k}}z^k\right\rangle=\left\langle z^q, z^k|z|^t\right\rangle
\]
for every $q\in\bbN_0$, so the first claim follows from the fact that polynomials are dense in $\mca_\nu^2(\bbD)$.  The second claim follows from a similar calculation.
\end{proof}

With Lemma \ref{bproj} in hand, we consider $u=\sum_{k=0}^{\infty}u_kz^k\in\mca^2_{\nu}(\bbD)$ and calculate
\begin{align*}
\langle T_{z^n}u,T_{z^n}u\rangle&=\displaystyle\sum_{k=0}^{\infty}|u_k|^2\gamma_{2k+2n}\\
\langle T_{\bar{z}^n}u,T_{\bar{z}^n}u\rangle&=\displaystyle\sum_{k=n}^{\infty}\frac{\gamma_{2k}^2}{\gamma_{2k-2n}}|u_k|^2\\
\mathrm{Re}[\langle T_{z^n}u,T_{C|z|^{s}}u\rangle-\langle T_{\bar{z}^n}u,T_{\bar{C}|z|^{s}}u\rangle]&=\displaystyle\sum_{k=0}^{\infty}\mathrm{Re}[u_k\bar{u}_{k+n}\bar{C}]\left(\gamma_{2k+2n+s}-\frac{\gamma_{2k+2n}\gamma_{2k+s}}{\gamma_{2k}}\right),
\end{align*}
Notice that $\gamma_{2k+2n+s}-\frac{\gamma_{2k+2n}\gamma_{2k+s}}{\gamma_{2k}}$ is the numerator of the entry $\mcj(\nu)_{n+k,k}$ and hence the discussion before Theorem \ref{better1} implies each of these terms is strictly positive.  That discussion also implies $\gamma_{2k+2n}>\frac{\gamma_{2k}^2}{\gamma_{2k-2n}}$ when $k\geq n$.  Thus we may reason as in \cite{SimHypo} and conclude that $T_{z^n+C|z|^s}$ is hyponormal if and only if
\begin{align}\label{cinf}
|C|\leq\inf\left\{\frac{\displaystyle\sum_{k=0}^{n-1}u_k^2\gamma_{2k+2n}+\displaystyle\sum_{k=n}^{\infty}u_k^2\left(\gamma_{2k+2n}-\frac{\gamma_{2k}^2}{\gamma_{2k-2n}}\right)}{2\displaystyle\sum_{k=0}^{\infty}u_ku_{k+n}\left(\gamma_{2k+2n+s}-\frac{\gamma_{2k+2n}\gamma_{2k+s}}{\gamma_{2k}}\right)}\right\},
\end{align}
where the infimum is taken over all non-negative sequences $\{u_k\}$ satisfying $\sum_{k=0}^{\infty}|u_k|^2\gamma_{2k}<\infty$ that are not the zero sequence.  For convenience, we will restate the condition \eqref{cinf} as
\begin{equation}\label{in}
\kappa:=\sup\left\{\frac{2\displaystyle\sum_{k=0}^{\infty}u_ku_{k+n}\left(\gamma_{2k+2n+s}-\frac{\gamma_{2k+2n}\gamma_{2k+s}}{\gamma_{2k}}\right)}{\displaystyle\sum_{k=0}^{n-1}u_k^2\gamma_{2k+2n}+\displaystyle\sum_{k=n}^{\infty}u_k^2\left(\gamma_{2k+2n}-\frac{\gamma_{2k}^2}{\gamma_{2k-2n}}\right)}\right\}\leq\frac{1}{|C|}
\end{equation}
Now we make the substitution
\[
v_j=\begin{cases}
u_j\sqrt{\gamma_{2k+2n}}\qquad\qquad\qquad & j\leq n-1\\
u_j\sqrt{\gamma_{2k+2n}-\frac{\gamma_{2k}^2}{\gamma_{2k-2n}}} & j\geq n
\end{cases}
\]
in \eqref{in} (we used the discussion before Theorem \ref{better1} here).  This gives
\begin{equation}\label{in2}
\kappa=\sup\left\{\frac{2\left(\displaystyle\sum_{k=0}^{n-1}\frac{v_kv_{k+n}\left(\gamma_{2k+2n+s}-\frac{\gamma_{2k+2n}\gamma_{2k+s}}{\gamma_{2k}}\right)}{\sqrt{\gamma_{2k+2n}}\sqrt{\gamma_{2k+4n}-\frac{\gamma_{2k+2n}^2}{\gamma_{2k}}}}+\displaystyle\sum_{k=n}^{\infty}\frac{v_kv_{k+n}\left(\gamma_{2k+2n+s}-\frac{\gamma_{2k+2n}\gamma_{2k+s}}{\gamma_{2k}}\right)}{\sqrt{\gamma_{2k+2n}-\frac{\gamma_{2k}^2}{\gamma_{2k-2n}}}\sqrt{\gamma_{2k+4n}-\frac{\gamma_{2k+2n}^2}{\gamma_{2k}}}}\right)}{\displaystyle\sum_{k=0}^{\infty}v_k^2}\right\}
\end{equation}
and we take the supremum over all non-negative $\{v_k\}_{k=0}^{\infty}$ that satisfy
\[
0<\sum_{k=0}^{\infty}v_k^2\frac{\gamma_{2k}}{\gamma_{2k+2n}-\frac{\gamma_{2k}^2}{\gamma_{2k-2n}}}<\infty.
\]
Notice that our assumptions on $\nu$ imply
\[
\lim_{k\rightarrow\infty}\left[\frac{\gamma_{2k}}{\gamma_{2k+2n}-\frac{\gamma_{2k}^2}{\gamma_{2k-2n}}}\right]=\infty,
\]
so in particular
\[
\ell^2_{\gamma}(\bbN_0):=\left\{\{v_k\}_{k=0}^{\infty}:\sum_{k=0}^{\infty}|v_k|^2\frac{\gamma_{2k}}{\gamma_{2k+2n}-\frac{\gamma_{2k}^2}{\gamma_{2k-2n}}}<\infty\right\}\subseteq\ell^2(\bbN_0)
\]
and in fact $\ell^2_{\gamma}(\bbN_0)$ is dense in $\ell^2(\bbN_0)$ in the $\ell^2(\bbN_0)$-metric because $\ell^2_{\gamma}(\bbN_0)$ contains all finite sequences.  Therefore, by the scale invariance of the expression in \eqref{in2}, we may define $\kappa$ by taking the supremum only over those non-negative sequences $\{v_k\}_{k=0}^{\infty}\in\ell^2_{\gamma}(\bbN_0)$ such that $\sum v_k^2=1$.

Since $\|\mcj(\nu)\|<\infty$ (by Theorem \ref{bounded}) and hence self-adjoint, we can use the well-known formula
\begin{equation}\label{jnorm}
\|\mcj(\nu)\|=\sup_{\|x\|=1}\langle x,\mcj(\nu)x\rangle_{\ell^2(\bbN_0)}
\end{equation}
(see \cite[page 216]{RS1}), where the supremum is taken over all $x\in\ell^2(\bbN_0)$ with norm $1$ in this space.  By the density property we just mentioned, it suffices to take the supremum over all $x\in\ell^2_{\gamma}(\bbN_0)$ with norm $1$ in $\ell^2(\bbN_0)$.  Since all the entries of $\mcj(\nu)$ are positive real numbers, we recognize that the right-hand side of \eqref{jnorm} is equal to the right-hand side of \eqref{in2}.  We conclude that $\|\mcj(\nu)\|=\kappa$ as desired.


\vspace{4mm}

\end{document}